\newtheorem{lemma}{Lemma}[section]
\newtheorem{theorem1}{Theorem}
\newtheorem{theorem}{Theorem}
\newtheorem{fact}{Conjecture}
 \newtheorem{rem}{Remark}
\newenvironment{proof}{\noindent{\bf Proof:}}{$\hfill \Box$ \vspace{10pt}}  
\def\Xint#1{\mathchoice
{\XXint\displaystyle\textstyle{#1}}%
{\XXint\textstyle\scriptstyle{#1}}%
{\XXint\scriptstyle\scriptscriptstyle{#1}}%
{\XXint\scriptscriptstyle\scriptscriptstyle{#1}}%
\!\int}
\def\XXint#1#2#3{{\setbox0=\hbox{$#1{#2#3}{\int}$ }
\vcenter{\hbox{$#2#3$ }}\kern-.6\wd0}}
\def\dashint{\Xint-}
\newcommand{\R}{\mathbb{R}}
\newcommand\T{\mathbb{T}}
\begin{document}
\title{The property $\log(f)\in \mathrm{BMO}(\R^n)$ in terms of Riesz transforms.}
\date{10 January 2013}

\author[Ioann Vasilyev]{Ioann Vasilyev}
\email{ioann.vasilyev@cyu.fr, milavas@mail.ru}
\subjclass[2010]{42B20, 42B25, 44A15}
\keywords{Muckenhoupt weights, $BMO$, subharmonic functions.}
\thanks{Statements and Declarations: The author declares no competing interests.}

\begin{abstract}
The condition mentioned in the title is equivalent to the representability of $f$ as the quotient
$f= v_1 /v_2$ , where $v_1$ and $v_2$ obey the inequalities $|R_j v_i| \leq cv_i, i = 1,2, j=1, \ldots ,n$. Here, $R_1, \ldots, R_n$ are the Riesz transformations.
\end{abstract}

\maketitle

\section{Introduction}
In the paper [2, p. 695] the following lemma has been proved.

\begin{lemma}
\label{lem1}
Let $ f>0$ be a measurable function on $\T$ (where $\T$ is a unit circle). Then $\log f\in BMO(\T)$ if and only if there exists $c>1, 0<\rho<1$ and a function $w>0$ such that $\frac{f}{c}\leq w \leq c\*f$
and $|H(w^\rho)|\leq cw^\rho$, where $H$ is a Hilbert transform on $\T$ (namely, $H(f)(x)=\frac{1}{2\pi}\; \dashint \limits_\T \frac{f(t)}{\tan(x-t)}\,dt$).
\end{lemma}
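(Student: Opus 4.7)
My plan is to handle the two implications separately, using the correspondence between analytic functions in the unit disk and their boundary values on $\mathbb{T}$.

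\textbf{Sufficiency of the representation.} Suppose $w$ is given with $f/c\le w\le cf$ and $|H(w^\rho)|\le cw^\rho$. Let $U$ and $V$ be the harmonic extensions into the disk of $w^\rho$ and $H(w^\rho)$, so that $F:=U+iV$ is analytic. The pointwise bound $|V|\le cU$ on $\mathbb{T}$ extends to the disk by the Poisson representation, hence $F$ takes values in the sector $|\arg z|\le \arctan c$. Then $\log F$ is analytic with $|\operatorname{Im}\log F|=|\arg F|\le \arctan c$. On the boundary, $\operatorname{Re}\log F=\log|F|$, and its Hilbert transform (modulo a constant) is $\arg F\in L^\infty$; since $H$ sends $L^\infty$ into $BMO$, this yields $\log|F|\in BMO(\mathbb{T})$. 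Because $w^\rho\le |F|\le\sqrt{1+c^2}\,w^\rho$, the function $\log w^\rho=\rho\log w$ differs from $\log|F|$ by a bounded function, so $\log w\in BMO$, and the comparison $f/c\le w\le cf$ upgrades this to $\log f\in BMO$.

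\textbf{Necessity of the representation.} Assume $\log f\in BMO$. The key tool is Fefferman's duality theorem $BMO(\mathbb{T})=L^\infty+H(L^\infty)$, with norm equivalence $\|\phi\|_{BMO}\asymp \inf\{\|a\|_\infty+\|b\|_\infty:\phi=a+Hb\}$. Applying it to $\phi=\rho\log f$ gives a decomposition $\rho\log f=a+Hb$ with $\|a\|_\infty+\|b\|_\infty\le C\rho\|\log f\|_{BMO}$; since $Hb$ is unchanged upon adding a constant to $b$, I normalize $\overline{b}=0$. For $\rho$ small, $\|b\|_\infty<\pi/2$, so the analytic function $F$ in the disk with boundary values $\exp(Hb-ib)$ takes values in the right half-plane, and with the zero-mean conventions $F(0)=1$. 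Setting $u:=\operatorname{Re}F|_{\mathbb{T}}=e^{Hb}\cos b>0$ and $w:=u^{1/\rho}$, the relation $Hu=\operatorname{Im}F|_{\mathbb{T}}=-e^{Hb}\sin b$ gives $|H(w^\rho)|=|Hu|\le \tan\|b\|_\infty\cdot u=\tan\|b\|_\infty\cdot w^\rho$.

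It remains to verify $w\asymp f$. A direct calculation yields $w=f\cdot e^{-a/\rho}\cdot (\cos b)^{1/\rho}$. The first factor lies in a fixed compact subset of $(0,\infty)$ because $\|a\|_\infty/\rho\le C\|\log f\|_{BMO}$ is independent of $\rho$. For the second factor, $\|b\|_\infty\le C\rho\|\log f\|_{BMO}\to 0$ as $\rho\to 0$, and the Taylor estimate $\log(\cos b)=O(b^2)$ gives $\log(\cos b)/\rho=O(\rho\|\log f\|_{BMO}^2)\to 0$, so $(\cos b)^{1/\rho}\to 1$ uniformly. Choosing $\rho$ small enough and then $c$ large enough produces the desired $w$.

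The main obstacle is the quantitative use of Fefferman's duality: I need both $\|a\|_\infty$ and $\|b\|_\infty$ to scale linearly in $\rho$, so that $\|a\|_\infty/\rho$ remains bounded while $\|b\|_\infty$ (and $\|b\|_\infty^2/\rho$, which controls the cosine factor) tends to zero. Both follow from the norm equivalence in Fefferman's theorem, so the plan should succeed as soon as $\rho$ is chosen small depending only on $\|\log f\|_{BMO}$.
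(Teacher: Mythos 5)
The paper does not actually prove Lemma~1.1: it is quoted from [2, p.~695] and used only as motivation, so there is no internal proof to compare yours against. Judged on its own terms, your argument is the standard one for this Helson--Szeg\H{o}-type statement and is essentially correct. The sufficiency half (positivity of the Poisson integral propagates $|V|\le cU$ into the disk, so $F$ maps into a sector, $\arg F$ is bounded, its conjugate $\log|F|$ is therefore in $BMO$, and $|F|\asymp w^{\rho}$ on $\T$) is sound, provided you record the implicit hypothesis $w^{\rho}\in L^{1}(\T)$ --- needed both to define $H(w^{\rho})$ and to place $F$ in $H^{1}$, which is what justifies identifying the boundary values of $\operatorname{Im}F$ with $H(w^{\rho})$ and those of $\log|F|$ with $\mathrm{const}-H(\arg F)$. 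In the necessity half the construction $u=e^{Hb}\cos b$, $w=u^{1/\rho}$ is exactly right, but one clause is literally false: Fefferman's decomposition holds modulo constants, so you only get $\rho\log f=a+Hb+\lambda$ with $\|a\|_{\infty}+\|b\|_{\infty}\lesssim\rho\|\log f\|_{BMO}$, and after absorbing $\lambda$ (which carries the mean of $\log f$) into $a$ the bound $\|a\|_{\infty}/\rho\le C\|\log f\|_{BMO}$ need not hold. This is harmless for the lemma as stated, since all that is required is that $e^{-a/\rho}(\cos b)^{1/\rho}$ be bounded above and below by \emph{some} finite constants, which is true for any fixed $a\in L^{\infty}$ and any $\|b\|_{\infty}<\pi/2$; the only place where smallness of $\rho$ is genuinely needed is to force $\|b\|_{\infty}<\pi/2$, and your scaling does deliver that. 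You should also note that the identity $Hu=-e^{Hb}\sin b$ requires $e^{Hb}\in L^{1}(\T)$ and $e^{-iG}\in H^{1}$, both supplied by Zygmund's theorem on conjugate functions once $\|b\|_{\infty}<\pi/2$.
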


Lemma~\ref{lem1} relates only to the unit circle $\T$. In this regard a question arises about creation of a similar result for space $\R^n$. To our mind, a natural formulation of Lemma~\ref{lem1} for the multidimensional case is the following conjecture.

\begin{fact}
\label{conj1}
Let $f>0$ be a measurable function on $\mathbb R^n$. Then $\log f\in BMO(\R^n)$ if and only if there exist $c>1, 0<\rho<1$ and a function $w$, such that $\frac{f}{c}<w<c\*f$
and $|R_j(w^\rho)|<cw^\rho$ for all $j$ from $1$ to $n$, where $R_j$ is an $j$-th Riesz transform in $\R^n$ (i.e. $R_j(f)(x)=c_n \dashint \limits_{\R^n} \frac{x_j-t_j}{|x-t|^{n+1}}f(t)\,dt$, where $c_n=\frac{\Gamma[\frac{n+1}{2}]}{\pi^{\frac{n+1}{2}}}$).
\end{fact}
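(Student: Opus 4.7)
I would treat the two directions of the conjecture separately, mirroring the one-dimensional argument of Lemma~\ref{lem1} but replacing the single conjugate function by the full Stein--Weiss system of $n$ conjugate harmonics attached to the Riesz transforms.

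For the easy direction ($w$ exists $\Rightarrow\log f\in BMO$), I would Poisson-extend $w^\rho$ and each of its Riesz transforms to the upper half space $\R^{n+1}_+$, obtaining the vector-valued harmonic function $U=(u_0,u_1,\dots,u_n)$ with $u_0=P_t\ast w^\rho$ and $u_j=P_t\ast R_j(w^\rho)$. This is a conjugate harmonic system in the sense of Stein--Weiss, so $|U|^q$ is subharmonic on $\R^{n+1}_+$ for every $q\ge\frac{n-1}{n}$. The hypothesis forces the pointwise boundary estimate $u_0(x,0)\le|U(x,0)|\le c\sqrt{n+1}\,w^\rho(x)$, while $u_0(x,0)=w^\rho(x)$ from below. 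Combining these two estimates with the standard reverse H\"older argument on Carleson boxes yields $w^\rho\in A_\infty(\R^n)$. By the Muckenhoupt--Coifman--Rochberg theorem this is equivalent to $\log w\in BMO$, and the chain $|\log f-\log w|\le\log c$ transfers the property to $f$.

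For the hard direction, John--Nirenberg gives $f^\rho\in A_2$ for all sufficiently small $\rho>0$. The plan is to manufacture $w$ via a Poisson lift: set $\Phi(x,t)=P_t\ast(\rho\log f)(x)$, fix a small $t_0>0$, and define $w^\rho(x):=\exp\bigl(\Phi(x,t_0)\bigr)$. Since $\Phi(\cdot,t_0)$ differs from $\rho\log f$ by at most $C\rho\|\log f\|_{BMO}$, one obtains $w\sim f$ after adjusting the constant $c$. The desired bound $|R_j(w^\rho)|\le cw^\rho$ should then follow from the fact that the harmonic extension of $e^{\rho\log f}$ is, up to a controlled error for small $\rho$, the exponential of the harmonic extension of $\rho\log f$: commuting Poisson and Riesz semigroups expresses $R_j(w^\rho)$ in terms of the conjugate harmonic $\partial_j\Phi$, which is again harmonic and, via Harnack and the $BMO$ control, stays pointwise of order $O(w^\rho)$. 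A backup strategy, should the direct estimate fall short, is to Jones-factor $f^\rho=W_1/W_2$ with $W_i\in A_1$, write each $W_i$ \`a la Coifman--Rochberg as a small power of a maximal function, and mollify so that the resulting $w^\rho$ is smooth enough for a pointwise Riesz bound.

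The main obstacle is precisely this necessity direction. In one dimension the outer function construction realises any prescribed boundary modulus as $|F|$ with $F$ holomorphic, which makes the pointwise bound $|Hu|\le cu$ essentially automatic. In $\R^n$ the Stein--Weiss system is only a first order Dirac-type system and admits no multiplicative structure, so upgrading the classical $L^p$-boundedness of $R_j$ on $A_p$ weights to a pointwise bound via a judicious choice of $w\sim f$ is a genuinely new difficulty, and this is where most of the work, whether through the harmonic-extension smoothing above or through a Jones / Coifman--Rochberg factorisation, must go.
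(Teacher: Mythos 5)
The statement you are proving is labelled a \emph{Conjecture} in the paper, and the paper itself only establishes one half of it: the sufficiency direction (existence of $w$ implies $\log f\in BMO$) is exactly Lemma~2.1, and your argument for that half --- Poisson extension of the Stein--Weiss system, subharmonicity of $|U|^{q}$ for $q>\frac{n-1}{n}$, domination by the Poisson integral of the boundary values, reverse H\"older, $A_\infty$, hence $\log w\in BMO$ and $|\log f-\log w|\le\log c$ --- is essentially the paper's proof. (One small point the paper handles by a separate remark and you should too: the subharmonicity/majorization step needs some integrability of $w^\rho$ to make $F$ a genuine conjugate system and to justify the maximum principle; the paper assumes $f\in L^p$ in Lemma~2.1 and then notes the $BMO$ bound is independent of $p$.)

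The necessity direction, however, is precisely what the paper could \emph{not} prove, and your sketch does not close the gap. The step ``commuting Poisson and Riesz semigroups expresses $R_j(w^\rho)$ in terms of the conjugate harmonic $\partial_j\Phi$, which \dots stays pointwise of order $O(w^\rho)$'' is an assertion, not an argument: in one dimension the bound $|H(w^\rho)|\le cw^\rho$ comes from exponentiating $u+i\tilde u$ into an \emph{outer function}, so that real and imaginary parts are automatically dominated by the modulus; as you yourself note, the Stein--Weiss system has no multiplicative structure, so $\exp$ of a conjugate system is not a conjugate system and there is no analogue of ``$|\mathrm{Re}\,F|,|\mathrm{Im}\,F|\le|F|$'' to exploit. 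Harnack plus $BMO$ control of $\Phi$ gives bounds on $\nabla\Phi$ in the interior scaled by $1/t$, not a pointwise bound on the boundary singular integral $R_j(w^\rho)$. Your backup via Jones factorization and Coifman--Rochberg also fails for the same structural reason: $W\in A_1$ gives $MW\le CW$ pointwise, but it does not give $|R_jW|\le CW$ --- the paper's Remark~4 records that the maximal-function analogue is the easy, known case, while the Riesz-transform version is the open difficulty. Indeed, the whole point of the paper's Theorem~1 is to sidestep this obstruction by replacing the single comparable function $w$ with a \emph{quotient} $g_1/g_2$ of two functions each satisfying $|R_jg_i|\le cg_i$; these $g_i$ are built by an entirely different mechanism (a Neumann-series fixed point $g_2=\sum_k\beta^kS^kf_0$ for an operator $S$ bounded on $L^2$, $L^2(v)$ and $L^2(v^2)$), and neither $g_1$ nor $g_2$ is comparable to a power of $f$. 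So your proposal proves the known half by the paper's own method and leaves the genuinely open half unproved.
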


I succeeded to prove only sufficiency in above mentioned expansion of Lemma~\ref{lem1} in case of $BMO(\R^n)$. Nevertheless a very similar to Conjecture~\ref{conj1} statement was proved in full generality. Precisely, the following theorem is valid.
\begin{theorem}
\label{thm1}
Let $f>0$ be a measurable function on $\R^n$ function. Then $\log f\in BMO(\R^n)$ if and only if there exists positive functions $g_1\in L^2(\R^n), g_2\in L^2(\R^n)$ and $\alpha \in \R$ such that $f=(\frac{g_1}{g_2})^\alpha$ and $|R_jg_i|<cg_i$ for all $j$ from $1$ to $n$ and $i\in \{1,2\}$.
\end{theorem}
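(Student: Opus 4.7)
I split into the two implications. The reverse direction $(\Leftarrow)$ reduces to the single-function statement: any positive $g\in L^2(\R^n)$ with $|R_jg|\le cg$ has $\log g\in\mathrm{BMO}(\R^n)$. The forward direction $(\Rightarrow)$ passes through the chain $\log f\in\mathrm{BMO}\Rightarrow f^{\epsilon}\in A_2\Rightarrow$ Jones factorization, with the free parameter $\alpha=1/\epsilon$ absorbing the John--Nirenberg exponent, followed by a construction converting $A_1$ weights into $L^2$ functions satisfying the pointwise Riesz bound.

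\textbf{Reverse direction.} Fix $i\in\{1,2\}$ and set $g=g_i$. Let $u(x,t):=(P_t\ast g)(x)$ and $v_j(x,t):=(P_t\ast R_jg)(x)$, so $(u,v_1,\dots,v_n)$ is a Stein--Weiss system of conjugate harmonic functions in $\R^{n+1}_+$ obeying the generalized Cauchy--Riemann equations $\partial_{x_j}u=\partial_tv_j$, $\partial_tu=-\sum_j\partial_{x_j}v_j$. Since $g>0$ we have $u>0$ throughout the half-space; since $|R_jg|\le cg$ pointwise a.e.\ on the boundary and the Poisson kernel is nonnegative, the inequality $|v_j(x,t)|\le cu(x,t)$ extends to every $t>0$. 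Using the Cauchy--Riemann relations together with Harnack-type interior gradient estimates for $v_j$ in terms of its values (themselves dominated by $cu$), I expect to get $t\,|\nabla_{(x,t)}\log u(x,t)|\le C$ uniformly. The standard Fefferman--Stein/Carleson-measure characterization then forces $\log g=\log u(\cdot,0)\in\mathrm{BMO}(\R^n)$. Summing with signs gives $\log f=\alpha(\log g_1-\log g_2)\in\mathrm{BMO}$, which finishes this direction.

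\textbf{Forward direction.} Given $\log f\in\mathrm{BMO}$, the John--Nirenberg inequality yields $\epsilon>0$ with $f^\epsilon\in A_2(\R^n)$; I set $\alpha=1/\epsilon$. By Jones factorization, $f^\epsilon=W_1/W_2$ with $W_1,W_2\in A_1(\R^n)$. The task is to produce positive $g_1,g_2\in L^2(\R^n)$ with $g_1/g_2=W_1/W_2$ pointwise and $|R_jg_i|\le cg_i$. My candidate is $g_i(x):=U_i(x,1)$, the trace at height one of a positive harmonic extension $U_i$ in $\R^{n+1}_+$ whose boundary data encodes $W_i$ up to a \emph{common} multiplicative regularizing factor (e.g.\ a Poisson-type weight chosen to force global $L^2$ integrability in $x$ at height one while not disturbing the ratio). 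The Riesz-transform bound should then follow by computing $R_jg_i(x)=V_{i,j}(x,1)$, where $V_{i,j}$ is the conjugate harmonic of $U_i$, and invoking the Harnack-type pointwise comparison $|V_{i,j}(x,1)|\le cU_i(x,1)$ that is available at fixed interior height once the data has been smoothed.

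\textbf{Main obstacle.} The hard step is the construction in the forward direction. The $A_1$ condition $MW_i\lesssim W_i$ does not imply the stronger pointwise inequality $|R_jW_i|\lesssim W_i$, so one cannot simply take $g_i=W_i$. The freedom to replace $f$ by $f^\epsilon$ and to introduce a common smoothing factor (which cancels in $g_1/g_2$) is precisely what rescues the necessity direction relative to Conjecture~\ref{conj1}; executing this replacement so that the smoothed weights simultaneously lie in $L^2$, preserve the ratio $W_1/W_2$, and inherit a pointwise Riesz bound is the technical heart of the argument.
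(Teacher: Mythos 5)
Both halves of your argument have genuine gaps at their decisive steps.

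In the reverse direction, the estimate $t\,|\nabla_{(x,t)}\log u(x,t)|\le C$ that you ``expect to get'' is true but vacuous: it holds for \emph{every} positive harmonic function $u$ on $\R^{n+1}_+$ by the interior gradient estimate $|\nabla u(x,t)|\le C_n\,u(x,t)/t$ (Harnack on the ball of radius $t$), so it cannot encode the hypothesis $|R_jg|\le cg$. Moreover it does not imply $\log g\in \mathrm{BMO}(\R^n)$: the Carleson-measure characterization of $\mathrm{BMO}$ concerns the harmonic extension of $\log g$, not the logarithm of the harmonic extension of $g$ (take $g(x)=e^{-|x|^2}$, whose Poisson extension is positive harmonic, yet $\log g\notin \mathrm{BMO}$). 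The paper's Lemma 2.1 instead exploits the fact that $|F|^{\varepsilon}=(U_f^2+\sum_j U_{R_jf}^2)^{\varepsilon/2}$ is subharmonic for every $\varepsilon>\frac{n-1}{n}$; choosing such an $\varepsilon<1$ and comparing $|F|^{\varepsilon}$ with the Poisson integral of its boundary values gives $(P_t*f)^{\varepsilon}\le C\,P_t*(f^{\varepsilon})$, which after localizing to a cube and controlling the tails is a reverse H\"older inequality, hence $f^{\varepsilon}\in A_\infty$ and $\log f\in \mathrm{BMO}$. That subharmonicity-for-small-exponents step is exactly the idea your sketch is missing.

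In the forward direction you correctly identify the obstacle ($W\in A_1$ does not give $|R_jW|\lesssim W$), but you do not resolve it: the proposed fix (traces at height one of harmonic extensions, a ``common regularizing factor,'' and a pointwise comparison $|V_{i,j}(x,1)|\le cU_i(x,1)$) is left entirely unexecuted, and the comparison itself is doubtful because the conjugate Poisson kernel $c_n y_j/(1+|y|^2)^{(n+1)/2}$ is not dominated by a constant multiple of the Poisson kernel. The paper avoids Jones factorization altogether. It writes $f=v^{\alpha}$ with $v^2\in A_2$ (so $v,\,1/v,\,1/v^2\in A_2$ as well), introduces the operator $S(g)=\sum_j|R_jg|+\sum_j|R_j(gv)|/v$, proves $S$ is bounded on $L^2(\R^n)$, $L^2(v)$ and $L^2(v^2)$ via weighted norm inequalities for the Riesz transforms, and then runs a Rubio de Francia iteration: $g_2=\sum_k\beta^kS^k(f_0)$ and $g_1=vg_2$ for small $\beta$. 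The bound $|R_jg_i|\le\beta^{-1}g_i$ then holds essentially by construction, and $g_1/g_2=v$ exactly. Some such self-improving construction is needed to close your argument; as written, the technical heart you point to is absent.
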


This paper is entirely devoted to the proof of Theorem~\ref{thm1}.

 \begin{rem}
An interest for the multidimensional equivalents of Lemma~\ref{lem1} is based on a strong application of that lemma in interpolation theory of one-dimensional Hardy spaces (see [2]). But it is unclear yet if Theorem~\ref{thm1}  proved in this paper has any interpolation applications.
\end{rem}

\section{Two main lemmas}
Let us formulate and prove two main lemmas that are needed later. The first of that lemmas will be used in the proof of sufficiency in Theorem~\ref{thm1}, the second one is exactly necessity in Theorem~\ref{thm1}.  

\begin{lemma}["Sufficiency"]
Let function $f>0$ belong to the space $L^p(\R^n)$ for some  $p>1$. If $|R_jf|<cf$ for all $j$ from $1$ to $n$, then  $\log f\in BMO(\R^n)$.
\end{lemma}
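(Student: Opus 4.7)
I would deduce this lemma from the Stein--Weiss subharmonicity of systems of conjugate harmonic functions. Let $u(x,t)=(P_{t}\ast f)(x)$ and $v_{j}(x,t)=(P_{t}\ast R_{j}f)(x)$ be the Poisson extensions to $\R^{n+1}_{+}$; since $f, R_{j}f\in L^{p}$, both are well defined and harmonic, $u>0$, and the pointwise hypothesis $|R_{j}f|\le cf$ transfers under Poisson averaging to
\[
|v_{j}(x,t)|\le (P_{t}\ast|R_{j}f|)(x)\le c\,u(x,t)
\]
throughout $\R^{n+1}_{+}$. The vector $F=(u,v_{1},\dots,v_{n})$ thus satisfies the generalized Cauchy--Riemann equations of Stein--Weiss together with the two-sided comparison $u\le|F|\le (1+nc^{2})^{1/2}u$.

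By the Stein--Weiss theorem, $|F|^{q}$ is subharmonic on $\R^{n+1}_{+}$ for every $q\ge(n-1)/n$; fix such a $q<1$. Combining the subharmonic mean-value inequality for $|F|^{q}$, the harmonic mean-value identity $u(x_{0},t_{0})=\dashint_{B}u$, and the comparison $|F|\sim u$, one obtains on every ball $B\subset\R^{n+1}_{+}$ centered at $(x_{0},t_{0})$
\[
\Bigl(\dashint_{B}u\Bigr)^{q}=u(x_{0},t_{0})^{q}\le|F(x_{0},t_{0})|^{q}\le\dashint_{B}|F|^{q}\le(1+nc^{2})^{q/2}\dashint_{B}u^{q}.
\]
Because $q<1$, Jensen's inequality supplies the reverse bound for free, so the $L^{1}$ and $L^{q}$ means of $u$ on interior balls are comparable---a weak reverse H\"older inequality for the harmonic extension $u$.

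It remains to push this interior estimate down to cubes in $\R^{n}$. Given a cube $Q\subset\R^{n}$ of side $r$, I would apply the previous inequality on a Whitney-type ball $B=B((x_{Q},r),r/4)\subset\R^{n+1}_{+}$ and then replace $u(\cdot,r)$ by the boundary trace $f$ using the standard Poisson-integral-vs-cube-average comparison, controlling the Poisson tail by a dyadic sum of averages. This yields the reverse-Jensen estimate
\[
\dashint_{Q}f\le C\Bigl(\dashint_{Q}f^{q}\Bigr)^{1/q},\qquad Q\subset\R^{n},
\]
with constants independent of $Q$. Since this is equivalent (for $0<q<1$) to a standard reverse H\"older inequality for $f^{q}$ with exponent $1/q>1$, one concludes that $f\in A_{\infty}(\R^{n})$, whence $\log f\in BMO(\R^{n})$.

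The delicate point is precisely the transfer from interior balls in $\R^{n+1}_{+}$ to boundary cubes, because the Poisson kernel has a non-integrable tail and there is no clean pointwise comparison between $u(y,r)$ and $\dashint_{Q}f$. I would handle this by iterating the interior inequality on Carleson boxes $Q\times(0,r)$ and exploiting the positivity of $u$ via Harnack-type estimates; the Stein--Weiss subharmonicity and the $A_{\infty}$ characterization via reverse Jensen are entirely classical.
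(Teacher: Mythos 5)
Your starting point coincides with the paper's: form the conjugate system $F=(u,v_1,\dots,v_n)$ of Poisson extensions, observe that $|R_jf|\le cf$ yields $u\le |F|\le (1+nc^2)^{1/2}u$, and invoke the Stein--Weiss subharmonicity of $|F|^{q}$ for $q\ge\frac{n-1}{n}$. The divergence --- and the gap --- lies in how the subharmonicity is exploited. You apply the sub-mean-value inequality on balls $B$ compactly contained in $\R^{n+1}_{+}$ (Whitney balls $B((x_Q,r),r/4)$), obtaining $(\dashint_B u)^q\le C\dashint_B u^q$. But on such balls Harnack's inequality already gives $\sup_B u\le C\,\inf_B u$ for \emph{every} positive harmonic function, so this ``weak reverse H\"older inequality'' is a tautology requiring no hypothesis on $f$ at all: the information carried by $|R_jf|\le cf$ evaporates at exactly this step. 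That information can only be extracted by pushing the comparison to the boundary. The paper does this by comparing the subharmonic function $|F|^{q}(\cdot,t_0+\cdot)$ with the \emph{harmonic} extension of its values on the level $t=t_0$ via the maximum principle on the whole half-space, and then letting $t_0\to 0$; this produces the genuinely nontrivial pointwise inequality
\[
\Bigl(\,\int\limits_{\R^n}f(y)\frac{t\,dy}{(t^2+|x-y|^2)^{\frac{n+1}{2}}}\Bigr)^{q}\le C\int\limits_{\R^n}f^{q}(y)\frac{t\,dy}{(t^2+|x-y|^2)^{\frac{n+1}{2}}},\qquad \tfrac{n-1}{n}<q<1,
\]
a reverse Jensen inequality against Poisson averages, which is where all the work done by the hypothesis is stored. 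Your plan contains no substitute for this global majorization; the local mean-value property on interior balls cannot deliver it.

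The second half --- converting Poisson averages into cube averages --- you correctly flag as the delicate point, but you leave it as a sketch (``iterate on Carleson boxes'', ``Harnack-type estimates''), whereas it constitutes the bulk of the paper's proof: both sides of the displayed inequality are split over a dilated cube $Q^N$ and its complement, the truncated Poisson tail $\varphi_N$ is shown to satisfy $\|\varphi_N\|_{L^1(\R^n)}\le C_5/N$, and $N$ is chosen large, independently of $Q$, so that the tail term on the right-hand side is absorbed by the identical tail term on the left. Without this absorption argument (or an equivalent) the passage from the Poisson-average inequality to the reverse H\"older inequality $\frac{1}{|Q|}\int_Q f^{1}\,\le\, C\bigl(\frac{1}{|Q|}\int_Q f^{q}\bigr)^{1/q}$ on cubes, and hence to $f^{q}\in A_\infty$ and $\log f\in BMO(\R^n)$, is not established. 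In short: you have the right tools and the right target, but the two steps that actually carry the proof --- the boundary majorization and the tail absorption --- are both missing.
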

\begin{proof}
Let us introduce an auxiliary system of conjugate harmonic functions: 
$$F=(U_f,U_{R_1f},...,U_{R_nf})\;$$
where 
$$U_g(x,t)=\int\limits_{\R^n}g(x-y)\frac{tdy}{(t^2+|y|^2)^\frac{n+1}{2}}$$ is the Poisson integral of the function $g$.

Since $f \in L^p(\R^n)$, the function 
$$|F|(x,t)= \left(\sum_{i=1}^n U_{R_jf}^2(x,t)+U_f^2 \right) ^{1/2}$$ 
is subharmonic raised to a power $\varepsilon$ for every  $\varepsilon$ strictly greater than $\frac{n-1}{n}$  (see [1, p.286]). 

Next, for $t_0>0$ we introduce three auxiliary notations 
$$G_{t_0}(x):= |F|^\varepsilon(x,t_0), \; H_{t_0}:=U_{G_{t_0}}, \; U_{t_0}(x,t):=|F|^\varepsilon(x,t_0+t).$$ 
Observe that  $H_{t_0}(x,t)$ is a harmonic function in  $\R_{+}^{n+1}$ and that at the point $t=0$ it is equal to $G_{t_0}(x)$. On top of that, the function $U_{t_0}(x,t)$ is subharmonic in $\R_+^{n+1}$ and holds $$U_{t_0}(x,t)|_{t=0}=G_{t_0}(x).$$

Consider the following auxiliary function 
$$W_{t_0}(x,t):=U_{t_0}(x,t)-H_{t_0}(x,t).$$
Note that the function $W_{t_0}(x,t)$ being continuous at $(x,t)\in \R^{n+1}_{+}$ decreases to zero at infinity and is equal to zero at the boundary.
Hence the maximum principle holds for $R_+^{n+1}$  and subharmonic function $W_{t_0}(x,t)$ (see [3, p. 64]).
From this it follows that because $H_{t_0}(x,t)$ majorizes $U_{t_0}(x,t)$ on the boundary, $H_{t_0}(x,t)$ majorizes $U_{t_0}$ everywhere in $\R_{+}^{n+1}$, namely for any point $(x,t) \in \R_{+}^{n+1}$ holds $U_{t_0}(x,t) \leq H_{t_0}(x,t)$.  

Hence,
\begin{equation*}
|F|^\varepsilon(x,t_0+t)
\leq\int\limits_{\R^n}G_{t_0}(x-y)\frac{tdy}{(t^2+|y|^2)^\frac{n+1}{2}}
=\int\limits_{\R^n}|F|^\varepsilon(x-y,t_0)\frac{tdy}{(t^2+|y|^2)^\frac{n+1}{2}}.
\end{equation*}
Further, passing to limit when $t_0\rightarrow 0$, we have that:
\begin{multline*}
|F|^{\varepsilon}(x,t)\leq \inf_{t_0>0} \int \limits_{\R^n}|F|^\varepsilon(x-y,t_0)\frac{tdy}{(t^2+|y|^2)^\frac{n+1}{2}}\\
\leq\int\limits_{\R^n}\left(\sum_{i=1}^n((R_i (f))(x-y))^2+(f(x-y))^2\right)^\frac{\varepsilon}{2}\frac{tdy}{(t^2+|y|^2)^\frac{n+1}{2}}.
\end{multline*}

In the last inequality above we have used the Fatou theorem. From here we have: 
$$\left(\left({U_{f}(x,t)}\right)^{2}+\sum_{j=1}^n {\left(U_{R_{j}f}(x,t) \right)^{2}}\right)^{\frac{\varepsilon}{2}}\leq C_1\int\limits_{\R^n}f^\varepsilon(y)\frac{tdy}{(t^2+|x-y|^2)^\frac{n+1}{2}}. $$
We omit the second term in the left hand side of the last inequality above since this term is positive:
$$\left (\int \limits_{\R^n} f(y)\frac{tdy}{(t^2+|x-y|^2)^\frac{n+1}{2}}\right)^{\varepsilon}\leq C_1\int \limits_{\R^n}f^{\varepsilon}(y)\frac{tdt}{(t^2+|x-y|^2)^{\frac{n+1}{2}}}.$$
for all $(x,t)\in \R_+^{n+1}$. 

Introduce three auxiliary notations, for $q>1$ we pose $\varepsilon:=1/q, f_1:=f^\varepsilon=$ ; hence it holds that $f=f_1^q$.
Then we have that: 
\begin{equation} 
	\label{eq1}
\int \limits_{\R^n} f_1^{q}(y)\frac{tdy}{(t^2+|x-y|^2)^\frac{n+1}{2}}\leq C_1\left(\int \limits_{\R^n}f_1(y)\frac{tdt}{(t^2+|x-y|^2)^{\frac{n+1}{2}}}\right)^q.
\end{equation}

We further fix a cube $Q\subset \R^n$. Let  $Q^N$ be the cube centered at the center of $Q$ whose facets are parallel to those of $Q$ and whose edge length is $N$ times longer than that of an edge of $Q$. We choose $t$ in~\eqref{eq1} equal to the edge length of $Q$ (namely, $t$ is equal to $l$) and choose $x$ to be the center of $Q$. 

We shall estimate the left and the right sides of the inequality~\eqref{eq1} separately. We begin with the left hand side.
\begin{multline}
\label{eq2}
\int \limits_{\R^n} f_1^{q}(y)\frac{ldy}{(l^2+|x-y|^2)^\frac{n+1}{2}}=\\=\int \limits_{Q^N} f_1^{q}(y)\frac{ldy}{(l^2+|x-y|^2)^\frac{n+1}{2}}+\int \limits_{{\R^n} \textbackslash{Q^N}} f_1^{q}(y)\frac{ldy}{(l^2+|x-y|^2)^\frac{n+1}{2}}\\
 \geq \frac{1}{(N^2+1)^\frac{n+1}{2}}\int\limits_{Q^N}f_1^q(y)dy\cdot \frac{1}{|Q|}+\int \limits_{{\R^n} \textbackslash{Q^N}} f_1^q(y)\frac{ldy}{(l^2+|x-y|^2)^\frac{n+1}{2}}.
\end{multline}
 Next we treat the right hand side.
\begin{multline}
\label{eq3}
 C_1 \left(\int \limits_{\R^n} f_1(y)\frac{ldy}{(l^2+|x-y|^2)^\frac{n+1}{2}}\right)^q=\\
=C_1\cdot \left(\int \limits_{Q^N} f_1(y)\frac{ldy}{(l^2+|x-y|^2)^\frac{n+1}{2}}+\int \limits_{{R^n} \textbackslash{Q^N}} f_1(y)\frac{ldy}{(l^2+|x-y|^2)^\frac{n+1}{2}}\right)^q\leq\\ 
\leq C_1\cdot 2^{q-1}\cdot \left(\int \limits_{Q^N} f_1(y)\frac{ldy}{(l^2+|x-y|^2)^\frac{n+1}{2}}\right)^q+\\
+C_1\cdot 2^{q-1}\cdot \left(\int \limits_{{\R^n} \textbackslash{Q^N}} f_1(y)\frac{ldy}{(l^2+|x-y|^2)^\frac{n+1}{2}}\right)^q\leq \cdots .
\end{multline} 

After denoting the first term in~\eqref{eq3} by $A$ we can write as follows.
$$\cdots \leq A+ C_1  2^{q-1} \|\varphi_N\|_{L^1 (\R^n)}^q\cdot \left(\int \limits_{R^n}f_1(y) \cdot \frac{\varphi_n(y) dy}{\|\varphi_n\|_{L^1 (\R^n)}}\right)^q \leq \cdots$$ 
where $\varphi_N$ is a function  defined as follows:
$$ \varphi_N(y)=
 \begin{cases}
\frac{l}{(l^2+|x-y|^2)\frac{n+1}{2}}, &\text{if } y \in \R^n \textbackslash{Q^N};\\
 0,&\text{if }  y\in Q^N.\\
 \end{cases}
 $$

We continue the estimates using Hölder's inequality: 
$$\cdots\leq A+C_1 2^{q-1}  \|\varphi_N\|_{L^1 (\R^n)}^q  \cdot  \int \limits_{\R^n}f_1^q(y)  \frac{\varphi_n(y) dy}{\|\varphi_n\|_{L^1 (\R^n)}} = 
$$
$$=A +C_1  2^{q-1}  \|\varphi_n\|_{L^1 (\R^n)}^{q-1} \cdot \int\limits_{{\R^n} \textbackslash{Q^N}} f_1^q(y)\frac{ldy}{(l^2+|x-y|^2)\frac{n+1}{2}}$$ 

We are now in position to choose $N$. We shall need from that number the following two things:
\\
(a). it is necessary that $C_1 2^{q-1} \|\varphi\|_{L^1(\R^n)}^{q-1} $ is less than $1$ ;
\\
(b). it is necessary that $N$ is independent from $Q$.

Let us now estimate the norm $\|\varphi_N\|_{L^1(\R^n)}$:
\begin{multline*}
\|\varphi_N\|_{L^1(\R^n)}= \int \limits _{{\R^n} \textbackslash{Q^N}} \frac{ldy}{(l^2+|x-y|^2)^\frac{n-1}{2}} \leq \int \limits _{|x-y|\geq \frac{N l}{2}} \frac {ldy}{(l^2+|x-y|^2)^\frac{n-1}{2}}
\\=\sum_{i=1}^\infty \int \limits_ {|x-y| \in [\frac{Nli}{2}, \frac{Nl(i+1)}{2}]} \frac{ldy}{(l^2+|x-y|^2)^\frac{n+1}{2}} \leq \sum_{i=1}^\infty \int \limits_ {|x-y| \in [\frac{Nli}{2}, \frac{Nl(i+1)}{2}]} \frac{ldy}{(l^2+\frac{N^2 l^2i^2}{4})^\frac{n+1}{2}} 
 \\ =\sum_{i=1}^\infty \biggl(\frac{C_3 l}{l^{n+1}(1+\frac {N^2i^2}{4})^\frac{n+1}{2}} \cdot \frac{N^n \cdot l^n \cdot (i+1)^n -N^n \cdot l^n \cdot i^n}{2^n} \biggr) \leq \sum_{i=1}^\infty  \frac {C_4}{N} \cdot \frac{i^{n-1}}{i^{n+1}} = \frac{C_5}{N}, 
 \end{multline*} 
where $C_5$ depends only on $n$. Further, for $C_1  2^{q-1} \|\varphi_N\|^{q-1}  $ to be less than 1, it is sufficient to require that $N$ is greater than $2 C_5 (C_1)^\frac{1}{q-1}$. We shall consider precisely such $N$.

Now it follows from the estimation~\eqref{eq1} that the expression~\eqref{eq2} does not exceed the expression~\eqref{eq3}.
\begin{equation*} 
	\begin{split} 
&\frac{1}{(N^2+1)^\frac{n+1}{2}}\int\limits_{Q^N}f_1^q(y)dy\cdot \frac{1}{|Q|}+\int \limits_{{\R^n} \textbackslash{Q^N}} f_1^q(y)\frac{ldy}{(l^2+|x-y|^2)^\frac{n+1}{2}}\leq \\
&\leq C_1 2^{q-1} \left(\int \limits_{Q^N} f_1(y) \frac{ldy}{(l^2+|x-y|^2)^\frac{n+1}{2}}\right)^q +\int \limits_{{\R^n} \textbackslash{Q^N}} f_1^q(y) \frac{ldy}{(l^2+|x-y|^2)^\frac{n+1}{2}}.
	\end{split}
\end{equation*}

Hence we have
$$\frac{1}{|Q^N|} \int \limits_{Q^N}f_1^q(y)dy \leq C_6 \left( \frac{1}{|Q^N|}  \int \limits_{Q^N}f_1(y)dy\right)^q.$$  
As one may notice, what we have just obtained is the inverse Hölder inequality for the function $f_1$ and the cube $Q^N$. Since $N$ is independent from $Q$, we can initially choose the cube $Q^{\frac{1}{N}}$ instead of $Q$. This yields the inverse Hölder inequality for the function $f_1$ and any cube $Q \in \R^n$. Hence (see [1, p.403]) it holds that $f_1 \in A_\infty$. Thanks to [1, p.409], from that we have $\log f_1 = \log (f^\epsilon) \in BMO(\R^n)$ and therefore $\log f \in BMO(\R^n).$
\end{proof}
\begin{rem}
The sufficiency in Conjecture 1 follows from the lemma that we have just proved.
\end{rem}
\begin{rem}
The condition $f \in L^p(\R^n)$ is not very important  in Lemma 2.1. Namely, $\|\log f\|_{BMO(\R^n)}$ does not depend on $p$ in the lemma's conditions. The norm mentioned above depends only on dimension of space $\R^n$ and on a constant $c$ in the inequality  $|R_jf|<cf$.
\end{rem}

 \begin{lemma}[``Necessity'']
Let $f>0$ be a measurable function on $\R^n$ such that $\log f \in \text {BMO}(\R^n)$. Then there exists $\alpha \in \R, g_1 \in L^2(\R^n), g_2 \in L^2(\R^n),  g_i >0; i \in {1,2}$ such that $f= (\frac{g_1} {g_2})^{\alpha}$ and $|{R_j g_i}| \leq c g_i$ for all $j$ from $1$ to $n$ and $i \in \{1,2\}$.
\end{lemma}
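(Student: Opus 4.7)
The plan has three main steps: reduce to a small BMO norm via a choice of $\alpha$, factor $f^\alpha$ as a quotient of two $A_1$ weights, and upgrade these weights to functions that satisfy a pointwise Riesz bound and lie in $L^2(\R^n)$. Since $\log f \in BMO(\R^n)$, the John--Nirenberg inequality together with the $A_\infty$-characterisation of BMO yields an $\alpha_0 > 0$ such that $W := f^\alpha$ belongs to the Muckenhoupt class $A_2(\R^n)$ for every $\alpha \in (0, \alpha_0)$. I fix such an $\alpha$; this will be the $\alpha$ of the statement.

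Next, by Jones' factorization theorem for $A_2$ weights, I would write $W = w_1/w_2$ with $w_1, w_2 \in A_1(\R^n)$, so that $Mw_i \leq c\, w_i$ almost everywhere. To promote this averaging bound to pointwise control by every Riesz transform simultaneously, I plan to invoke the Coifman--Rochberg representation, which asserts that each $A_1$ weight is comparable to $(M\mu_i)^{\delta_i}$ for a suitable positive measure $\mu_i$ and $\delta_i \in (0,1)$. For $\delta_i$ small, the function $(M\mu_i)^{\delta_i}$ should be identifiable, up to a bounded factor, with the boundary trace of $|\mathcal{F}_i|^{\delta_i}$ for a Stein--Weiss conjugate harmonic system $\mathcal{F}_i$ on $\R^{n+1}_+$. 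The subharmonicity of $|\mathcal{F}_i|^\varepsilon$ for $\varepsilon > (n-1)/n$ --- the very ingredient that powered the sufficiency argument above --- should then yield, via a Poisson majorisation and a Fatou-type passage to the boundary, a pointwise inequality $|R_j g_i| \leq c\, g_i$ for a function $g_i$ equivalent to $w_i$.

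To secure the integrability condition $g_i \in L^2(\R^n)$, I would multiply each $g_i$ by a slowly decaying factor $(1+|x|^2)^{-s}$ with $s$ large enough to force $L^2$-membership and small enough to preserve the Riesz bound (by a direct estimation based on the explicit Riesz kernel). The bounded correction introduced cancels in the ratio $g_1/g_2 = W$ up to a multiplicative constant, which can be absorbed into $\alpha$ by a small adjustment of its value.

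The principal obstacle is the upgrade from the averaging-type bound $Mw_i \leq c\, w_i$ to pointwise control by every singular integral $R_j$. In one dimension on $\T$ this passage is handled cleanly through outer $H^p$ functions, whose moduli provide a dictionary between $A_1$ weights and the Hilbert-transform bound; in $\R^n$ no such clean dictionary exists, and one must work directly with Stein--Weiss systems and subharmonicity of $|\mathcal{F}|^\varepsilon$. I expect that precisely this rigidity is what forces the author to weaken the conclusion --- introducing the exponent $\alpha$ and requiring only $g_i \in L^2$ --- relative to the shape of Conjecture~\ref{conj1}.
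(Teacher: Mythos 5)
Your opening move (choosing $\alpha$ so that $v$ with $f=v^{\alpha}$ lies in $A_2$) coincides with the paper's, but after that your argument has a genuine gap exactly where you yourself flag the ``principal obstacle'': nothing in the proposal actually converts the $A_1$ bound $Mw_i\le c\,w_i$, or the Coifman--Rochberg representation $w_i\sim(M\mu_i)^{\delta_i}$, into the pointwise bound $|R_jw_i|\le c\,w_i$. That implication is false in general: an $A_1$ weight can be bounded yet have a jump discontinuity (already on $\R$, $w=1+\chi_{[0,1]}$ is an $A_1$ weight, but $Hw$ has logarithmic singularities at $0$ and $1$, so $|Hw|\le cw$ fails), so some structure beyond $A_1$ is indispensable. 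Your proposed source of that structure --- identifying $(M\mu_i)^{\delta_i}$ with the boundary modulus of a Stein--Weiss conjugate harmonic system --- is not justified and is not a known fact; the subharmonicity of $|\mathcal{F}|^{\varepsilon}$ runs in the wrong direction here (it is what extracts a reverse H\"older inequality \emph{from} a Riesz bound, as in the sufficiency lemma, not a Riesz bound from an $A_1$ condition). The one-dimensional dictionary via outer functions that you allude to is precisely what has no substitute in $\R^n$, so the plan stalls at its central step. The final fix of multiplying by $(1+|x|^2)^{-s}$ to force $L^2$ membership is also unsubstantiated, since $R_j$ does not commute with multiplication by such a factor and the claimed ``direct estimation'' is not carried out.

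The paper sidesteps all of this with a Rubio de Francia--type iteration, which is the idea your proposal is missing. With $v$ chosen so that $v,\ 1/v,\ v^2,\ 1/v^2$ all lie in $A_2$, one defines
$$S(g)=\sum_{i=1}^n|R_ig|+\sum_{i=1}^n|R_i(gv)|\cdot\frac{1}{v},$$
proves that $S$ is bounded on $L^2(\R^n)$, $L^2(v)$ and $L^2(v^2)$ (this uses only the weighted $L^2$ boundedness of Riesz transforms for $A_2$ weights, not any pointwise information), and sets $g_2=\sum_{k\ge0}\beta^kS^kf_0$ for a suitable $f_0>0$ and $\beta$ small, with $g_1=vg_2$. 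The series construction makes $Sg_2\le\beta^{-1}g_2$ automatic, which delivers both $|R_jg_2|\le\beta^{-1}g_2$ and $|R_jg_1|\le\beta^{-1}g_1$ simultaneously (the second summand in $S$ is there precisely to control $|R_j(vg_2)|/v$), and the $L^2$ membership of $g_1,g_2$ comes for free from the norm convergence of the series rather than from any a posteriori truncation. You should recast your argument around this algorithm rather than around Jones factorization.
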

\begin{proof}
 Since $\log f \in \text {BMO}(\R^n)$, there exist $\alpha \in \R$ such that $f=v^{\alpha}$ and $v^2 \in A_2$ (see [1, p.409]). As a consequence, we get that functions $v,\frac {1}{v},\frac{1}{v^2}$ belong to the class $A_2$.

Let us consider operator $S$, defined in such way:
$$S(g)(x)=\sum_{i=1}^n |R_i (g)(x)| + \sum_{i=1}^n |R_i(gv)(x)| \cdot \frac{1}{v(x)}.$$
 Now, let us prove that $S$ is a continuous operator: 1). from $L^2(v)$ to $L^2(v)$, 2). from $L^2(\R^n)$ to  $L^2(\R^n)$, 3). from $L^2(v^2)$ to $L^2(v^2)$.

1). We can estimate a square of the norm of function  $Sg$ in $L^2(v)$ assuming that $g \in L^2(v)$.
$$\|Sg\|^2_{ L^2(v)}=\int_{\R^n} \left(\sum_{i=1}^n|R_j(g)(x)|+|R_j(gv)(x)|\frac{1}{v(x)}\right)^2 v(x) dx \leq $$
$$C\int_{\R^n}\sum_{i=1}^n|R_j(g)(x)|^2 v(x)dx+C \int_{\R^n}\sum_{i=1}^n |R_j(gv)(x)|^2\frac{1}{v(x)}dx \leq \cdots$$

Note that since $R_j$ is a continuous operator from $L^2(v)$ to $L^2(v)$ (see [1,p.411]), we have $\|R_jg\|^2_{L^2(v)}< c\|g\|^2_{L^2(v)}$. Also, as $R_j$ is continuous from $L^2(\frac{1}{v}) $ to $L^2(\frac{1}{v})$ ($\frac{1}{v}\in A^2)$, and because $gv\in L^2(\frac{1}{v})$, we have that $$\|R_j(gv)\|^2_{L^2(\frac{1}{v})}<c\|gv\|^2_{L^2(\frac{1}{v})}.$$

Now it is possible to finish a sequence of inequalities:
$$\cdots \leq C\|g\|^{2}_{L^2(v)}+ C\|gv\|^{2}_{L^2(\frac{1}{v})}=C\|g\|^{2}_{L^2(v)}.$$

2). In the same manner we can estimate norm of function $Sg$ in $L^2(\R^n) $ raised to power $2$, assuming that $g \in L^2(\R^n)$:
$$\|Sg\|^2_{ L^2(\R^n)}=\int_{\R^n} \left(\sum_{i=1}^n|R_j(g)(x)|+|R_j(gv)(x)|\frac{1}{v(x)}\right)^2 dx \leq $$
$$C\int_{\R^n}\sum_{i=1}^n|R_j(g)(x)|^2 dx+C \int_{\R^n}\sum_{i=1}^n |R_j(gv)(x)|^2\frac{1}{v(x)^2}dx \leq \cdots$$

Since $R_j$ is a continuous operator from $ L^2(\R^n)$ to $L^2(\R^n)$, then 
$$\|R_jg\|^2_{L^2(\R^n)}< c\|g\|^2_{L^2(\R^n)}$$ 

Also, because $\frac{1}{v^2}\in A_2$, and $gv\in L^2(\frac{1}{v^2})$ (since $g\in L^2(\R^n)$), we get that   
$$\|R_j(gv)\|^2_{L^2(\frac{1}{v^2})}< c\|gv\|^2_{L^2(\frac{1}{v^2}) }.$$
 Let us continue a sequence of inequalities:
$$\cdots\leq C\|g\|^{2}_{L^2(\R^n)}+ C\|gv\|^{2}_{L^2(\frac{1}{v^2})}=c_0\|g\|^{2}_{L^2(\R^n)}.$$

3). At last we will estimate norm $\|Sg\|_{L^2(v^2)}^{2}$ assuming that $g \in L^2(v^2)$:
$$\|Sg\|^2_{L^2(v)}=\int_{\R^n} \left(\sum_{i=1}^n|R_j(g)(x)|+|R_j(gv)(x)|\frac{1}{v(x)}\right)^2 v(x)^2 dx \leq $$
$$C\int_{\R^n}\sum_{i=1}^n|R_j(g)(x)|^2 v(x)^2 dx+C \int_{\R^n}\sum_{i=1}^n |R_j(gv)(x)|^2dx \leq \cdots$$
Since $R_ j$ is a continous operator from $ L^2(v^2)$ to $L^2(v^2)$($v^2\in A_2$), and $g\in L^2(v^2)$, we have that $|| R_jg||^2_{ L^2(v^2) }< c\|g\|^2_{ L^2(v^2) }$. And, because $R_ j$ is a continuous operator from $ L^2(\R^n)$ to $L^2(\R^n)$ and $gv\in L^2(\R^n)$, we also have that
 $$\| R_j(gv)\|^2_{ L^2(\R^n) }< c\| gv\|^2_{ L^2(\R^n) }.$$
By using that we finish sequence of inequalities
$$\cdots \leq C\|g\|^{2}_{L^2(v^2)}+ C\|gv\|^{2}_{L^2(\R^n)}=c_1\|g\|^{2}_{L^2(v^2)}.$$

We now return to the proof of the lemma. Let us find  function $v$ as a ratio of two functions $g_1$ and $g_2$, where $g_i\in L^2(\R^n)$ and $|R_jg_i|< c g_{i}$ for each and every $j$ from $1$ to $n$ and $i\in \{1,2\}$.

We define a sequence of functions $\{f_i\}_{i=0}^{\infty}$ as follows:
\begin{itemize}
\item $f_0$ is any function that have the following properties: $f_0>0, f_0\in L^2(R^n), f_0\in L^2(v), f_0\in L^2(v^2)$. For example, one can use the characteristic function of unit ball $B$ as such function.
\item  $f_i(x):=Sf_{i-1}(x)$ for $1 \leq i$. Note that the following inclusions hold according to the proof above:  $f_i\in L^2(\R^n), f_i\in L^2(v), f_i\in L^2(v^2)$.
\end{itemize}

Now let us define the function $g_2$ as follows:  
$$g_2(x)=\sum_{k=0}^\infty\beta ^k f_k(x),$$ where  $\beta$ is chosen so that $g_2$ would lie in $L^2(\R^n)$. That is quite possible because when $\beta \leq \frac{1}{2c_0}$, corresponding series converges absolutely:
$$\sum_{k=0}^\infty\beta ^k \|f_k(x)\|_{L^2(\R^n)} \leq \sum_{k=0}^\infty\beta^k c^k \|f_0(x)\|_{L^2(R^n)} \leq$$
$$\leq \sum_{k=0}^\infty \frac{1}{2^k} \|f_0(x)\|_{L^2(R^n)}=\|f_0(x)\|_{L^2(\R^n)}<\infty $$

Similarly, we obtain the expression for $g_1$:
$$g_2(x)=v(x)g_2(x)=\sum_{k=0}^\infty v(x)\beta ^k f_k(x).$$
Let us show that  $g_1$ belongs to space  $L^2(\R^n)$ when $\beta \leq \frac{1}{2c_1}$. We prove again that the corresponding series converges absolutely in $L^2(R^n)$:
$$\sum_{k=0}^\infty\beta ^k \|v(x) f_k(x)\|_{L^2(\R^n)} =\sum_{k=0}^\infty\beta ^k \|f_k(x)\|_{L^2(v^2)} \leq \sum_{k=0}^\infty\beta^k c_{1}^k \|f_0(x)\|_{L^2(v^2)} \leq$$
$$\leq \sum_{k=0}^\infty \frac{1}{2^k} \|f_0(x)\|_{L^2(v^2)}=\|f_0(x)\|_{L^2(v^2)}<\infty $$
Finally we prove that $g_1$ and $g_2$ are such that  $|R_j(g_i)|<cg_i$ for all $j$ from $1$ to $n$, and $i\in \{1,2\}$.
At first, let us look at $g_2$:
$$|R_j(g_2)|= \sum_{k=0}^\infty\beta ^k |R_j(f_k)|\leq \sum_{k=0}^\infty\beta ^k |S(f_k)|=$$
$$=\sum_{k=0}^\infty\beta ^k f_{k+1}<\frac{f_0}{\beta}+\frac{1}{\beta}\sum_{k=1}^\infty\beta ^k f_k=\frac{1}{\beta}\sum_{k=0}^\infty\beta ^k f_k=\frac{1}{\beta}g_2.$$
Now let us prove the same statement for $g_1$:
$$|R_j(g_1)| = |R_j(vb)| = \sum_{k=0}^\infty\beta ^k |R_j(vf_k)|\leq \sum_{k=0}^\infty\beta ^k |S(f_k)|v=$$
$$=v\sum_{k=0}^\infty\beta ^k f_{k+1}<v\left(\frac{f_0}{\beta}+\frac{1}{\beta}\sum_{k=1}^\infty\beta ^k f_k\right)=\frac{v}{\beta}\sum_{k=0}^\infty\beta ^k f_k =\frac{1}{\beta}v b=\frac{1}{\beta}g_1.$$
\end{proof}
\begin{rem}
We would like to note that similar statements to Lemma 2.2 are valid if Riesz transform are substituted with any singular integral operator  or maximal operator. The proofs of corresponding statements are similar to the proof of Lemma 2.2
\end{rem}
\begin{rem}
Similar to Lemma 2.1 statement is correct if Riesz transforms are substituted with maximal operator. In case of substitution with maximal operator the proof follows from well known facts (see [1]).
\end{rem}
\section{Proof of the main statement}
Let us remind statement of the main result of this paper  Theorem 1.
\begin{theorem1}
Let $f>0$ be a measurable function on $\R^n$. Then $\log f\in BMO(\R^n)$ if and only if there exists positive functions $g_1\in L^2(\R^n), g_2\in L^2(\R^n)$ and $\alpha \in \R$ such that  $f=(\frac{g_1}{g_2})^\alpha$ and also $|R_jg_i|\leq cg_i$ for all $j$ from $1$ to $n$, and $i\in \{1,2\}$.
\end{theorem1}
\begin{proof}
The implication ``$\Rightarrow$''. The statement is precisely Lemma 2.2.

The implication ``$\Leftarrow$''. From Lemma 2.1 we know that $\log g_i \in BMO(\R^n)$ when $i\in\{1,2\}$. At the same time it holds that 
$$\log f=\alpha \log g_1 -\alpha \log g_2 \in BMO(\R^n).$$
So, the theorem is proved.
\end{proof}

\renewcommand{\refname}{References}

\end{document}